\newtheorem{tm}{Theorem}
\newtheorem{df}{Definition}
\newtheorem{lm}{Lemma}
\newtheorem{pp}{Proposition}
\newtheorem{cor}{Corollary}
\newtheorem{examp}{Example}
\title[Generalized Interval Polynomial]{The Generalized Interval Polynomial of a Poset}
\author[I. George \and K. Yeats]{Ian George\addressmark{1} \and Karen Yeats\thanks{\href{mailto:kayeats@uwaterloo.ca}{kayeats@uwaterloo.ca}. Karen Yeats is partially supported by an NSERC Discovery grant and the Canada Research Chairs program.}\addressmark{1}}
\address{\addressmark{1}Department of Combinatorics and Optimization, University of Waterloo, Waterloo ON}
\abstract{For any finite poset we define a generating polynomial counting upsets, downsets, and their intersection.  We investigate the behaviour of this polynomial with respect to poset operations, show that it distinguishes series-parallel posets, and comment on connections to the causal set approach to quantum gravity.}
\keywords{poset, upset, downset, interval, causal set theory, generating polynomial}
\begin{document}

\maketitle

\section{Introduction}

In this paper we introduce and investigate a generating polynomial for upsets, downsets, and their intersection, of a finite poset.  Enumerating upsets/downsets and intervals in posets is a problem that has garnered attention in the recent literature.  For many nice families of posets, such enumeration involves rich combinatorics, e.g. \cite{bousquet2023intervals, azam2023generating, baril2025enumeration, chapoton2006nombre, preville2017enumeration, melou18number, baril2024intervals}.  The intersection of an arbitrary upset and downset is a generalization of an interval, that being the case where the upset and downset are each generated by a single element.

Our polynomial is a polynomial with five variables which records 
the upset and downset sizes, the size of their intersection, and the sizes of the antichains that generate them.  The pairs of upsets and downsets we consider satisfy a containment condition, which gives a simple product formula for the polynomial on disjoint union of posets.

This polynomial is also motivated from the causal set approach to quantum gravity wherein the enumeration of intervals in a poset connects to spacetime dimension.

\section{The Polynomial}

\subsection{Definitions}

All posets considered here will be finite, so we will simply say poset instead of finite poset.  Let $P$ be a poset with partial order $\leq$.  A set $U \subseteq P$ ($D \subseteq P$) is \emph{upward-closed} (\emph{downward-closed}), or is an \emph{upset} (\emph{downset}), if for each $u \in U$ ($u \in D$), if $v \in P$ and $u \leq v$ ($v \leq u$), then $v \in U$ ($v \in D$).  For a set $S \subseteq P$ we say the \emph{upset generated by $S$} is $V(S) = \{v \in P \; : \; u \leq v \; {\rm for \; some} \; u \in S\}$, and similarly the \emph{downset generated by $S$} is $\Lambda(S) = \{u \in P \;:\; u \leq v \; {\rm for \; some} \; v \in S\}$.  Indeed, $V(S)$ ($\Lambda(S)$) is an upset (downset) of $P$.  When $S = \{u\}$, we write $V(u)$ ($\Lambda(u)$) instead of $V(\{u\})$ ($\Lambda(\{u\})$) for the \emph{principal upset (downset) generated by $u$}.  Given an upset $U \subseteq P$ (downset $D \subseteq P$), there is a unique antichain $A = A(U)$ ($A = A(D)$) such that $U = V(A)$ ($D =\Lambda(A)$).

For a pair consisting of an upset $U$ and downset $D$ of $P$, we say that $U$ is \emph{beneath} $D$ or that $D$ is \emph{above} $U$, denoted $U \preceq D$, if $A(U) \subseteq D$ and $A(D) \subseteq U$.  In this case, we say the set $U \cap D$ is a \emph{generalized interval} of $P$.  If $A(U) = \{u\}$ and $A(D) = \{v\}$, then $U \cap D$ is simply an \emph{interval}, denoted by $[u, v]$.

Now we can define our object of interest.

\begin{df}
    The \emph{generalized interval polynomial} $\Phi(P ; s, t, x, y, z)$ of a poset $P$ is

    \[ \Phi(P ; s, t, x, y, z) = \sum_{U \preceq D} s^{|A(U)|} t^{|A(D)|} x^{|D|} y^{|U|} z^{|U \cap D|} \]

    \noindent where the sum runs over all upsets $U \subseteq P$ and downsets $D \subseteq P$ such that $U \preceq D$.
\end{df}

\noindent Note that we allow upsets and downsets to be empty, so every such polynomial will have a constant term of $1$ corresponding to $U = D = \emptyset$.  Also note that if either the upset or downset is empty, the other is necessarily empty due to the $\preceq$ condition, hence every non-constant term in a generalized interval polynomial is divisible by $stxyz$. 

\begin{examp}
    Consider the poset $P$ on the left in Figure \ref{fig:combined_examples} with elements $\{a, b, c, d\}$ and non-trivial relations $a \leq b \leq d$ and $a \leq c \leq d$.  The generalized interval polynomial of $P$ is

    \begin{align*}
        \Phi(P ; s, t, x, y, z) = 1+ stxyz(x^3y^3z^3 & + sx^3y^2z^2 +  
        tx^2y^3z^2 + stx^2y^2z \\
        & \quad + 2x^3yz + 2xy^3z + x^3 + y^3 + 2xy)
    \end{align*}
    
    \begin{figure}
        \centering
        \begin{tikzpicture}[scale=0.9]
            \node (a) at (0, 0) {};
            \node [circle, fill, inner sep=3pt, label={[xshift=-0.0cm, yshift=-0.8cm]$a$}] at (0, 0) {};
            \node (b) at (-0.5, 0.8666) {};
            \node [circle, fill, inner sep=3pt, label={[xshift=-0.34cm, yshift=-0.4cm]$b$}] at (-0.5, 0.8666) {};
            \node (c) at (0.5, 0.8666) {};
            \node [circle, fill, inner sep=3pt, label={[xshift=0.34cm, yshift=-0.4cm]$c$}] at (0.5, 0.8666) {};
            \node (d) at (0, 1.732) {};
            \node [circle, fill, inner sep=3pt, label={[xshift=-0.0cm, yshift=0.0cm]$d$}] at (0, 1.732) {};
            
            \draw (a) -- (b) -- (d);
            \draw (a) -- (c) -- (d);
        \end{tikzpicture}
        \hspace{1cm}
    \begin{tikzpicture}[scale=0.6]
        \node (a) at (0, 0) {};
        \node [circle, fill, inner sep=3pt] at (0, 0) {};
        \node (b) at (2, 0) {};
        \node [circle, fill, inner sep=3pt] at (2, 0) {};
        \node (c) at (0, 2) {};
        \node [circle, fill, inner sep=3pt] at (0, 2) {};
        \node (d) at (2, 2) {};
        \node [circle, fill, inner sep=3pt] at (2, 2) {};
        \node (e) at (0, 4) {};
        \node [circle, fill, inner sep=3pt] at (0, 4) {};
        \node (f) at (2, 4) {};
        \node [circle, fill, inner sep=3pt] at (2, 4) {};

        \draw (a) -- (c) -- (b) -- (d);
        \draw (c) -- (e) -- (d) -- (f);


        \node (1) at (4, 0) {};
        \node [circle, fill, inner sep=3pt] at (4, 0) {};
        \node (2) at (6, 0) {};
        \node [circle, fill, inner sep=3pt] at (6, 0) {};
        \node (3) at (4, 2) {};
        \node [circle, fill, inner sep=3pt] at (4, 2) {};
        \node (4) at (6, 2) {};
        \node [circle, fill, inner sep=3pt] at (6, 2) {};
        \node (5) at (5, 3) {};
        \node [circle, fill, inner sep=3pt] at (5, 3) {};
        \node (6) at (5, 4) {};
        \node [circle, fill, inner sep=3pt] at (5, 4) {};

        \draw (1) -- (3) -- (2) -- (4);
        \draw (4) -- (5) -- (6);
        \draw (1) -- (6);
    \end{tikzpicture}
        \hspace{1cm}
        \begin{tikzpicture}[scale=0.5]
        \node (a) at (0, 0) {};
        \node [circle, fill, inner sep=3pt] at (0, 0) {};
        \node (b) at (2, 0) {};
        \node [circle, fill, inner sep=3pt] at (2, 0) {};
        \node (c) at (0, 2) {};
        \node [circle, fill, inner sep=3pt] at (0, 2) {};
        \node (d) at (2, 2) {};
        \node [circle, fill, inner sep=3pt] at (2, 2) {};
        \draw (a) -- (c) -- (b) -- (d);
    \end{tikzpicture}
        \caption{Some example posets of interest.}
        \label{fig:combined_examples}
    \end{figure}
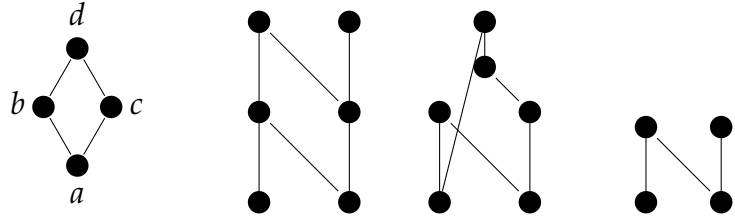
\end{examp}

\subsection{Polynomial Specifications/Combinatorial Information}

We can extract combinatorial information about $P$ by extracting coefficients and taking certain evaluations of $\Phi(P ; s, t, x, y, z)$, as summarized in Table~\ref{tab:info_from_poly}.  
Note that coefficient extraction is equivalent to taking a partial derivative and applying an appropriate scaling, so the results below could equivalently be phrased in that language.  

\begin{table}[h]
    \centering
    \begin{tabular}{|c||c|}
    \hline
    Poset information/property & How to obtain it \\
    \hline
    Number of elements & \begin{tabular}{@{}c@{}}$[stz]\Phi(P ; s, t, 1, 1, z)$, or maximum $n$ \\ such that $[z^n]\Phi(P ; s, t, x, y, z) \neq 0$\end{tabular} \\
    \hline
    Number of cover relations & $[stz^2]\Phi(P ; s, t, 1, 1, z)$ \\
    \hline
    Number of relations & $[s t] \Phi(P ; s, t, 1, 1, 1)$ \\
    \hline
    Number of minimal elements & $m$ such that $[s^m y^{|P|}]\Phi(P ; s, t, x, y, z) \neq 0$ \\
    \hline
    Number of maximal elements & $m$ such that $[t^m x^{|P|}]\Phi(P ; s, t, x, y, z) \neq 0$ \\
    \hline
    Number of antichains of size $k$ & $[s^k t^k z^k]\Phi(P ; s, t, 1, 1, z)$ \\
    \hline
    Poset width & Maximum $k$ such that $[s^kt^kz^k]\Phi(P ; s, t, x, y, z) \neq 0$ \\
    \hline
    \end{tabular}
    \caption{Extracting information about $P$ from the polynomial.}
    \label{tab:info_from_poly}
\end{table}

In addition to the above information, we can specialize $\Phi(P; s, t, x, y, z)$ to other generating polynomials.  In Table \ref{tab:other_gens_from_poly} we summarize some other generating polynomials that can be obtained from $\Phi$.  Note that in any term of the generalized interval polynomial the exponent of $x$ and $y$ is greater than or equal to the exponent of $z$, and the exponent of $z$ is greater than or equal to the exponent of $s$ and $t$.  Thus for any of the substitutions involving $w^{-1}$ below, the result is still a polynomial and so we may set $w=0$.

\begin{table}[h]
    \centering
    \begin{tabular}{|c||c||c|}
    \hline
    Generating polynomial for sizes of... & How to obtain it & Found in \\
    \hline
    Generalized intervals & $\Phi(P ; 1, 1, 1, 1, z)$ & \\
    \hline
    Upsets, downsets, generalized intervals & $\Phi(P ; 1, 1, x, y, z)$ & \\
    \hline
    Upsets & $\Phi(P ; 1, 1, 1, yw, 1/w)|_{w=0}$ & \\
    \hline
    Downsets & $\Phi(P ; 1, 1, xw, 1, 1/w)|_{w=0}$ & \cite{ding2019antichain}, \cite{proctor1984bruhat} \\
    \hline
    Principal upsets & $[s]\Phi(P ; s, 1, 1, yw, 1/w)|_{w=0}$ & \cite{sorkin_comm} \\
    \hline
    Principal downsets & $[t]\Phi(P ; 1, t, 1, yw, 1/w)|_{w=0}$ & \cite{sorkin_comm} \\
    \hline
    Principal upsets and downsets & $[z] \Phi(P ; 1, 1, x, y, z)$ & \cite{sorkin_comm} \\
    \hline
    Intervals & $[st] \Phi(P ; s, t, 1, 1, z)$ & \\
    \hline
    Principal upsets, downsets, intervals & $[st] \Phi(P ; s, t, x, y, z)$ & \\
    \hline
    Antichains & $\Phi(P ; 1/w, 1, 1, 1, zw)|_{w=0}$ & \cite{ding2019antichain} \\
    \hline
    \end{tabular}
    \caption{Other generating polynomials obtained from $\Phi(P ; s, t, x, y, z)$.}
    \label{tab:other_gens_from_poly}
\end{table}


\subsection{Distinguishability}

We saw in the previous section some of the combinatorial information that is captured in the generalized interval polynomial.  We now ask the natural question: exactly how much can we determine about a poset $P$ just from its generalized interval polynomial?  
Computations in SageMath \cite{sagemath} reveal that $P$ is not determined (up to isomorphism) by its generalized interval polynomial.  
At six elements, the middle two posets of Figure~\ref{fig:combined_examples} are the smallest two posets that are indistinguishable by their generalized interval polynomials.  For seven elements there are 16 pairs of posets which are not distinguished, while for eight elements there 249 pairs.

\subsection{Poset Operations}

As $\Phi(P ; s, t, x, y, z)$ is a generating polynomial encoding combinatorial information about $P$, it is of interest to know how it behaves with poset operations.  Given a poset $P$, its \emph{dual} $P^*$ is the poset with the same underlying set but with relations $u \leq_{P^*} v$ if and only if $v \leq_P u$.  It is immediate from this definition that $S$ is an upset of $P$ if and only if $S$ is a downset of $P^*$, and $A$ is an antichain of $P$ if and only if it is an antichain of $P^*$.  Thus we find that $\Phi(P^* ; s, t, x, y, z) = \Phi(P ; t, s, y, x, z)$.

Given two posets $P, Q$ on disjoint underlying sets, their \emph{disjoint union} $P+Q$ is the poset with underlying set $P \cup Q$ and relations $u \leq_{P + Q} v$ if and only if $u, v \in P$ and $u \leq_P v$ or $u, v \in Q$ and $u \leq_Q v$.

\begin{pp}
    $\Phi(P+Q ; s, t, x, y, z) = \Phi(P ; s, t, x, y, z) \Phi(Q ; s, t, x, y, z)$.
    \label{prop:union}
\end{pp}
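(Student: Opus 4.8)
The plan is to set up a bijection between the pairs $(U,D)$ indexing $\Phi(P+Q;s,t,x,y,z)$ and the Cartesian product of the pairs indexing $\Phi(P;s,t,x,y,z)$ and $\Phi(Q;s,t,x,y,z)$, under which all five statistics are additive; the result is then the distributive law $\sum_{(i,j)}a_ib_j=\big(\sum_i a_i\big)\big(\sum_j b_j\big)$. First I would record the basic structural fact that, since $P+Q$ has no relations between an element of $P$ and an element of $Q$, a set $S\subseteq P+Q$ is an upset of $P+Q$ if and only if $S\cap P$ is an upset of $P$ and $S\cap Q$ is an upset of $Q$; the analogous statements hold for downsets and for antichains. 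Writing $U_P=U\cap P$, $U_Q=U\cap Q$, $D_P=D\cap P$, $D_Q=D\cap Q$, this yields mutually inverse maps $U\mapsto(U_P,U_Q)$ and $(U_P,U_Q)\mapsto U_P\cup U_Q$ between upsets of $P+Q$ and pairs of upsets, and likewise for downsets.

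Next I would check that the generating antichains decompose compatibly: the minimal elements of $U=U_P\cup U_Q$ in $P+Q$ are precisely the minimal elements of $U_P$ in $P$ together with the minimal elements of $U_Q$ in $Q$, so $A(U)=A(U_P)\sqcup A(U_Q)$, and dually $A(D)=A(D_P)\sqcup A(D_Q)$. Consequently the condition $U\preceq D$, namely $A(U)\subseteq D$ and $A(D)\subseteq U$, is equivalent to the conjunction $A(U_P)\subseteq D_P$, $A(U_Q)\subseteq D_Q$, $A(D_P)\subseteq U_P$, $A(D_Q)\subseteq U_Q$, that is, to $U_P\preceq D_P$ in $P$ and $U_Q\preceq D_Q$ in $Q$. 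Thus the bijection above restricts to a bijection between the pairs $U\preceq D$ summed over in $\Phi(P+Q;s,t,x,y,z)$ and the pairs $\big((U_P\preceq D_P),(U_Q\preceq D_Q)\big)$ summed over in the product $\Phi(P;s,t,x,y,z)\,\Phi(Q;s,t,x,y,z)$.

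Finally, since $U\cap D=(U_P\cap D_P)\sqcup(U_Q\cap D_Q)$ and every union involved is disjoint, each of the five exponents is additive across the components: $|A(U)|=|A(U_P)|+|A(U_Q)|$, $|A(D)|=|A(D_P)|+|A(D_Q)|$, $|D|=|D_P|+|D_Q|$, $|U|=|U_P|+|U_Q|$, and $|U\cap D|=|U_P\cap D_P|+|U_Q\cap D_Q|$. Hence the monomial attached to $(U,D)$ factors as the product of the monomials attached to $(U_P,D_P)$ and $(U_Q,D_Q)$, and summing over all admissible pairs gives the claimed factorization of $\Phi(P+Q;s,t,x,y,z)$.

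I expect the only real content to be the identity $A(U)=A(U_P)\sqcup A(U_Q)$ and the resulting componentwise reformulation of $\preceq$; everything else is bookkeeping. The one point to state carefully is that $\preceq$ couples both antichains with both sets, so one must rule out a ``mixed'' constraint relating, say, $A(U_P)$ to $D_Q$ — which cannot occur, since $A(U_P)\subseteq P$ and $D_Q\subseteq Q$ are disjoint, so $A(U_P)\subseteq D$ already forces $A(U_P)\subseteq D_P$.
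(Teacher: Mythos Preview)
Your proof is correct and follows essentially the same route as the paper's: decompose an admissible pair $(U,D)$ in $P+Q$ into componentwise pairs $(U_P,D_P)$ and $(U_Q,D_Q)$, verify this gives a bijection compatible with $\preceq$, and use additivity of the five exponents to conclude by polynomial multiplication. If anything, you are slightly more careful than the paper in making explicit that $A(U)=A(U_P)\sqcup A(U_Q)$ and that the $\preceq$ condition is genuinely equivalent (not just implied in one direction) to the conjunction of the componentwise conditions.
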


\begin{proof}
    Let $U$ be an upset of $P+Q$ and $D$ a downset of $P+Q$ such that $U \preceq D$.  Then we can write $U = U_P \cup U_Q$, where $U_P$ is an upset of $P$ and $U_Q$ an upset of $Q$.  Similarly, we have $D = D_P \cup D_Q$.  These decompositions are unique, since $U_P = U \cap P$ and $U_Q = U \cap Q$ (and similarly for the downsets).  Since $U \preceq D$, it follows that $U_P \preceq D_P$ and $U_Q \preceq D_Q$.  Furthermore, $|A(U)| = |A(U_P)| + |A(U_Q)|$, $|U| = |U_P| + |U_Q|$, with analogous expressions holding for $D$, and $|U \cap D| = |U_P \cap D_P| + |U_Q \cap D_Q|$.  Since each $U_P \cup U_Q$, where $U_P, U_Q$ are upsets of $P, Q$, respectively, is an upset of $P + Q$ (and similarly for downsets), the upsets of $P+Q$ are exactly those of the form $U_P \cup U_Q$ (and similarly for downsets).  The result then follows from polynomial multiplication.
\end{proof}

The \emph{ordinal sum}, $P \oplus Q$, of $P$ and $Q$ also has underlying set $P \cup Q$, but has the relations of $P+Q$ as well as the relations $u \leq_{P \oplus Q} v$ if $u \in P$, $v \in Q$.  The cover relations of $P \oplus Q$ are precisely the cover relations of $P$ and $Q$ along with the relations $u \leq v$ such that $u$ is a maximal element of $P$ and $v$ a minimal element of $Q$.

\begin{pp}
    \begin{align*}
        \Phi(P \oplus Q ; s, t, x, y, z) & = y^{|Q|}(\Phi(P; s, t, x, y, z)-1) + x^{|P|}(\Phi(Q; s, t, x, y, z)-1) \\
        & \quad + x^{|P|} y^{|Q|} \Phi(P ; s, 1, 1, yw, z/w)|_{w=0} \Phi(Q ; 1, t, xw, 1, z/w)|_{w=0} + 1
    \end{align*}
\end{pp}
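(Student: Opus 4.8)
The plan is to group the pairs $(U,D)$ with $U$ an upset of $P\oplus Q$, $D$ a downset of $P\oplus Q$, and $U\preceq D$, according to how $U$ and $D$ meet the two blocks $P$ and $Q$. The defining feature of $P\oplus Q$ is that every element of $P$ lies below every element of $Q$. Consequently, if an upset $U$ satisfies $U\cap P\neq\emptyset$ then $Q\subseteq U$, so every upset of $P\oplus Q$ is either an arbitrary upset of $Q$ or is of the form $U_P\cup Q$ for a nonempty upset $U_P$ of $P$; dually every downset of $P\oplus Q$ is either an arbitrary downset of $P$ or is of the form $P\cup D_Q$ for a nonempty downset $D_Q$ of $Q$. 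Crossing these two dichotomies gives four cases, which I would treat separately and then add.

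First the degenerate case $U\subseteq Q$ and $D\subseteq P$: here $A(U)\subseteq Q$ is disjoint from $D$ and $A(D)\subseteq P$ is disjoint from $U$, so $U\preceq D$ forces $U=D=\emptyset$, which contributes the constant term $1$. Next, the case $U\subseteq Q$ and $D=P\cup D_Q$ with $D_Q\neq\emptyset$: since $D_Q\neq\emptyset$ no element of $P$ is maximal in $D$, so $A(D)=A(D_Q)$ with the antichain computed inside $Q$, and likewise $A(U)\subseteq Q$. Hence $U\preceq D$ in $P\oplus Q$ is equivalent to $U\preceq D_Q$ in $Q$ with both sets nonempty, and since $|D|=|P|+|D_Q|$ and $|U\cap D|=|U\cap D_Q|$ while $|A(U)|$, $|A(D)|$, $|U|$ are unchanged, this case contributes $x^{|P|}\bigl(\Phi(Q;s,t,x,y,z)-1\bigr)$. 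The mirror case $D\subseteq P$ and $U=U_P\cup Q$ with $U_P\neq\emptyset$ (symmetric to the previous one, or deducible from it using $(P\oplus Q)^*=Q^*\oplus P^*$ together with $\Phi(R^*;s,t,x,y,z)=\Phi(R;t,s,y,x,z)$) contributes $y^{|Q|}\bigl(\Phi(P;s,t,x,y,z)-1\bigr)$.

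The remaining case is $U=U_P\cup Q$ and $D=P\cup D_Q$ with $U_P$ a nonempty upset of $P$ and $D_Q$ a nonempty downset of $Q$. Now $A(U)=A(U_P)\subseteq P\subseteq D$ and $A(D)=A(D_Q)\subseteq Q\subseteq U$, so $U\preceq D$ holds automatically and the sum over such pairs factors over the independent choices of $U_P$ and $D_Q$. Using $|U|=|U_P|+|Q|$, $|D|=|D_Q|+|P|$, and the disjoint decomposition $U\cap D=U_P\cup D_Q$, this case contributes
\[
x^{|P|}y^{|Q|}\Bigl(\sum_{U_P\neq\emptyset}s^{|A(U_P)|}(yz)^{|U_P|}\Bigr)\Bigl(\sum_{D_Q\neq\emptyset}t^{|A(D_Q)|}(xz)^{|D_Q|}\Bigr),
\]
the sums being over nonempty upsets $U_P$ of $P$ and nonempty downsets $D_Q$ of $Q$. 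To recognise these factors I would use the substitution already behind the ``Upsets'' and ``Downsets'' rows of Table~\ref{tab:other_gens_from_poly}: replacing $y$ by $yw$ and $z$ by $z/w$ and then setting $w=0$ keeps exactly the pairs $U\preceq D$ with $U\subseteq D$, and for each upset $U$ there is a unique such $D$, namely $\Lambda(U)$ (indeed $U\subseteq D$ and $A(D)\subseteq U$ together force $\Lambda(U)\subseteq D\subseteq\Lambda(U)$). Hence $\sum_{U}s^{|A(U)|}(yz)^{|U|}=\Phi(P;s,1,1,yw,z/w)|_{w=0}$, and symmetrically $\sum_{D}t^{|A(D)|}(xz)^{|D|}=\Phi(Q;1,t,xw,1,z/w)|_{w=0}$, with the nonempty sums obtained by subtracting the constant term $1$.

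Adding the four contributions and simplifying gives the claimed identity. The step I expect to be most delicate is the bookkeeping: establishing the antichain identities $A(U_P\cup Q)=A(U_P)$ and $A(P\cup D_Q)=A(D_Q)$ in the mixed cases (which really do depend on the adjacent block being nonempty), tracking all five exponents correctly through each ``block into block'' substitution, and then reconciling the constant term and the empty-set corrections coming from the four cases so that they collapse into one closed formula; I would sanity-check the final expression on small examples such as chains, where the pairs $(U,D)$ can be listed by hand.
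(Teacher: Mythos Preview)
Your four-case decomposition is exactly the paper's argument, only phrased in terms of whether $U$ meets $P$ and whether $D$ meets $Q$ rather than in terms of where the generating antichains $A(U)$ and $A(D)$ sit; the two framings are equivalent and the casework matches line for line, including the identification of the two factors in the mixed case with the specialisations $\Phi(P;s,1,1,yw,z/w)|_{w=0}$ and $\Phi(Q;1,t,xw,1,z/w)|_{w=0}$.

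The one place not to gloss is your final ``simplifying gives the claimed identity''. Your four cases add up to
\[
1 + y^{|Q|}\bigl(\Phi(P)-1\bigr) + x^{|P|}\bigl(\Phi(Q)-1\bigr)
+ x^{|P|}y^{|Q|}\Bigl(\Phi(P;s,1,1,yw,z/w)\big|_{w=0}-1\Bigr)\Bigl(\Phi(Q;1,t,xw,1,z/w)\big|_{w=0}-1\Bigr),
\]
with the $-1$ in \emph{each} factor of the last product, exactly as you derived. This does not simplify to the displayed formula: writing $A$ and $B$ for the two specialised polynomials, the displayed right-hand side exceeds yours by $x^{|P|}y^{|Q|}(A+B-1)$. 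Already for $P=Q$ a single point the displayed formula yields extra terms $xy$, $sxy^2z$, $tx^2yz$, none of which is divisible by $st$ and hence none of which can occur in any generalized interval polynomial. So your argument and the paper's run identically and are correct; it is the statement that carries a small misprint (the missing $-1$'s in the product), not your proof. Keep the $-1$'s and do not try to ``simplify'' them away.
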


\begin{proof}
    For an upset/downset pair $U \preceq D$ of $P \oplus Q$, we consider four cases.  If $U = D = \emptyset$, then $U \preceq D$ contributes the term $1$ to the polynomial.  For the remaining cases, we assume $U, D \neq \emptyset$.  If $A(U), A(D) \subseteq P$, then we get a term that is exactly the term from $\Phi(P ; s, t, x, y, z)$ corresponding to $(U \cap P) \preceq D$ multiplied by $y^{|Q|}$ since $|U| = |U \cap P|+|Q|$.  Furthermore, every non-constant term of $\Phi(P ; s, t, x, y, z)$ contributes a term to $\Phi(P \oplus Q ; s, t, x, y, z)$ in this way.  Similarly, when $A(U), A(D) \subseteq Q$, $|D| = |D \cap Q| + |P|$ we get a term that is the term from $\Phi(Q ; s, t, x, y, z)$ corresponding to $U \preceq (D \cap Q)$ multiplied by $x^{|P|}$.  The final case is when $U \preceq D$ is such that $A(U) \subseteq P$ and $A(D) \subseteq Q$.  Here, $|U| = |U \cap P| + |Q|$, $|D| = |D \cap Q| + |P|$, and $|U \cap D| = |U \cap P| + |D \cap Q|$.  Thus we get a term

    \[ x^{|P|} y^{|Q|}( s^{|A(U)|} y^{|U \cap P|} z^{|U \cap P|}) ( t^{|A(D)|} x^{|D \cap Q|} z^{|D \cap Q|}) \]

    \noindent which is precisely the product of the term from $y^{|Q|} \Phi(P ; s, 1, 1, yw, z/w)|_{w=0}$ corresponding to $V(A(U)) \cap P \preceq \Lambda(A(U))$ and the term from $x^{|P|}\Phi(Q ; 1, t, xw, 1, z/w)|_{w=0}$ corresponding to $V(A(D)) \preceq \Lambda(A(D)) \cap Q$.  These are the only cases we need to consider since no antichain of $P \oplus Q$ has non-empty intersection with both $P$ and $Q$.  All terms on the right hand side appear in exactly one of these four cases, so the result follows.
\end{proof}

The Cartesian product $P \times Q$ of posets $P, Q$ is the poset with underlying set $P \times Q$ (the ordinary Cartesian product of sets) and relations $(u_P, u_Q) \leq (v_P, v_Q)$ if and only if $u_P \leq v_P$ and $u_Q \leq v_Q$.  This is a commutative operation, i.e. $P \times Q \cong Q \times P$.  There appears to be no simple formula for $\Phi(P \times Q ; s, t, x, y, z)$ in terms of $\Phi(P ; s, t, x, y, z)$ and $\Phi(Q ; s, t, x, y, z)$, but we may write a formula for the simpler polynomial $\phi(P \times Q ; x, y, z) = [st]\Phi(P \times Q ; s, t, x, y, z)$.  Recall that this is the generating polynomial of (non-empty) interval sizes which also keeps track of the sizes of the principal upset and downset giving the interval.

\begin{pp}
    Let $\phi(P ; x, y, z) = \sum p_{i, j, k} x^i y^j z^k$ and $\phi(Q ; x, y, z) = \sum q_{i, j, k} x^i y^j z^k$.  Then
    
    \[ \phi(P \times Q ; x, y, z) = \sum q_{i, j, k} \phi(P ; x^i, y^j, z^k) = \sum p_{i, j, k} \phi(Q ; x^i, y^j, z^k) \]
\end{pp}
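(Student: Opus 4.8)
The plan is to reduce the statement to a single structural fact about Cartesian products: principal upsets, principal downsets, and intervals of $P \times Q$ are Cartesian products of the corresponding objects in $P$ and $Q$. First I would unwind the definition of $\phi$. Extracting $[st]$ from $\Phi$ isolates exactly the terms with $|A(U)| = |A(D)| = 1$, i.e.\ $U = V(u)$ and $D = \Lambda(v)$; the condition $U \preceq D$ then reduces to $u \leq v$, and $U \cap D = V(u) \cap \Lambda(v) = [u,v]$. Hence
\[ \phi(P ; x, y, z) = \sum_{u \leq_P v} x^{|\Lambda(v)|}\, y^{|V(u)|}\, z^{|[u,v]|}, \]
so $p_{i,j,k}$ is the number of pairs $u \leq_P v$ with $|\Lambda(v)| = i$, $|V(u)| = j$, $|[u,v]| = k$, and likewise $q_{i,j,k}$ counts such pairs in $Q$.

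Next I would record the product structure. Since $(u_P, u_Q) \leq (v_P, v_Q)$ in $P \times Q$ iff $u_P \leq v_P$ and $u_Q \leq v_Q$, we get $\Lambda\big((v_P,v_Q)\big) = \Lambda(v_P) \times \Lambda(v_Q)$, $V\big((u_P,u_Q)\big) = V(u_P) \times V(u_Q)$, and therefore $[(u_P,u_Q),(v_P,v_Q)] = [u_P,v_P] \times [u_Q,v_Q]$. Taking cardinalities, all three statistics appearing in $\phi$ are multiplicative across the product. Substituting into the formula above and splitting the sum over pairs in $P$ and pairs in $Q$,
\[ \phi(P \times Q ; x, y, z) = \sum_{u_P \leq v_P}\ \sum_{u_Q \leq v_Q} x^{|\Lambda(v_P)|\,|\Lambda(v_Q)|}\, y^{|V(u_P)|\,|V(u_Q)|}\, z^{|[u_P,v_P]|\,|[u_Q,v_Q]|}. \]
Fixing the pair in $Q$ and setting $i = |\Lambda(v_Q)|$, $j = |V(u_Q)|$, $k = |[u_Q,v_Q]|$, the inner sum over pairs in $P$ is exactly $\phi(P ; x^i, y^j, z^k)$. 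Grouping the remaining sum over pairs in $Q$ by the value of $(i,j,k)$, and using that $q_{i,j,k}$ counts the pairs realizing it, yields $\phi(P \times Q ; x, y, z) = \sum q_{i,j,k}\, \phi(P ; x^i, y^j, z^k)$; the second equality follows by symmetry from $P \times Q \cong Q \times P$.

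I do not expect a genuine obstacle: the entire content is the multiplicativity of the three statistics, and this is precisely why one must pass to $\phi$ rather than work with $\Phi$ directly --- upsets of $P \times Q$ need not be products of upsets, so the generating-antichain statistics $|A(U)|$, $|A(D)|$ have no analogous product behaviour. The only point needing a little care is verifying that the interval statistic factors, i.e.\ that $\big(V(u_P) \times V(u_Q)\big) \cap \big(\Lambda(v_P) \times \Lambda(v_Q)\big) = [u_P,v_P] \times [u_Q,v_Q]$, which is immediate from the coordinatewise description of the order; after that, the regrouping is routine bookkeeping.
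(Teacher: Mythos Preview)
Your proposal is correct and matches the paper's proof essentially line for line: both observe that principal upsets, principal downsets, and intervals in $P\times Q$ factor as Cartesian products, then expand $\phi(P\times Q)$ as a double sum over pairs $(u_P\le v_P,\;u_Q\le v_Q)$ and regroup. Your additional remarks---unwinding why $[st]\Phi$ gives the stated sum, and why the argument cannot be run for $\Phi$ itself---are sound and add clarity but do not change the approach.
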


\begin{proof}
    First observe that an interval $[(u_P, u_Q), (v_P, v_Q)]$ in $P \times Q$ where $u_P, v_P \in P$ and $u_Q, v_Q \in Q$ has the form $[(u_P, u_Q), (v_P, v_Q)] = [u_P, v_P] \times [u_Q, v_Q]$.  For an upset $V((u_P, u_Q))$ in $P \times Q$ we have $V(u_P, u_Q) = V(u_P) \times V(u_Q)$.  Similarly for a downset, $\Lambda((v_P, v_Q)) = \Lambda(v_P) \times \Lambda(v_Q)$.  Now we just perform the calculation:

    \begin{align*}
        \phi(P \times Q ; x, y, z) & = \sum_{(u_P, u_Q) \leq (v_P, v_Q)} x^{|\Lambda((v_P, v_Q))|} y^{|V((u_P, u_Q))|} z^{|[(u_P, u_Q), (v_P, v_Q)]|} \\
        & = \sum_{u_Q \leq v_Q} \sum_{u_P \leq v_P} x^{|\Lambda(v_P)| \cdot |\Lambda(v_Q)|} y^{|V(u_P)| \cdot |V(u_Q)|} z^{|[u_P, v_P]| \cdot |[u_Q, v_Q]|} \\
        & = \sum_{u_Q \leq v_Q} \sum_{u_P \leq v_P} (x^{|\Lambda(v_Q)|})^{|\Lambda(v_P)|} (y^{|V(u_Q)|})^{|V(u_P)|} (z^{|[u_Q, v_Q]|})^{|[u_P, v_P]|} \\
        & = \sum_{u_Q \leq v_Q} \phi(P ; x^{|\Lambda(v_Q)|}, y^{|V(u_Q)|}, z^{|[u_Q, v_Q]|}) \\
        & = \sum q_{i, j, k} \phi(P ; x^i, y^j, z^k)
    \end{align*}

    \noindent The other equality follows directly from swapping sums.
\end{proof}

\subsection{Families of Posets}

Here we find explicit formulas for the generalized interval polynomials of some nice families of posets.  

\begin{examp}
    If $P$ consists of a single element, then $\Phi(P; s, t, x, y, z) = 1+stxyz$.  Thus if $P$ is an antichain of $n$ elements, then $\Phi(P; s, t, x, y, z) = (1+stxyz)^n$.
\end{examp}

\begin{examp}   
    If $P = [n]$ is a chain consisting of $n$ elements, then

    \begin{align*}
        \Phi(P; s, t, x, y, z) & = 1 + st \sum_{k=1}^n z^k \sum_{i=0}^{n-k} x^{k+i}y^{n-i} = 1 + st\sum_{k=1}^n z^k \frac{x^{n+1}y^k - x^k y^{n+1}}{x-y} \\
        & = 1 + \frac{st}{x-y} \left( x^{n+1} \frac{1-(yz)^{n+1}}{1-yz} - y^{n+1}\frac{1-(xz)^{n+1}}{1-xz} \right)
    \end{align*}
\end{examp}

\begin{examp}
    The \emph{Boolean lattice} $B_n$ is the poset of subsets of $[n]$ ordered under inclusion.  
    We do not have a nice expression for $\Phi(B_n ; s, t, x, y, z)$.  However, we do have, from a direct computation,
    
    \[ \phi(B_n ; x, y, z) = \sum_{\ell=0}^n \sum_{k=0}^\ell \binom{n}{\ell} \binom{\ell}{k} x^{2^\ell} y^{2^{n-k}} z^{2^{\ell-k}} \]
\end{examp}

\begin{examp}
    Consider the product of chains $P = [n_1] \times [n_2] \times \cdots \times [n_k]$.  We do not have an expression for $\Phi(P ; s, t, x, y, z)$, but for the ordinary interval generating polynomial we have:
    \[ \phi(P ; x, y, z) = \sum_{1 \leq i_1 \leq j_1 \leq n_1} \cdots \sum_{1 \leq i_k \leq j_k \leq n_k} x^{\prod_r j_r} y^{\prod_r (n_r - i_r + 1)} z^{\prod_r (j_r - i_r + 1)} \]
    \noindent where in each exponent the product ranges  over $r \in [k]$.

    Consider the case of $k=2$, so $P = [n_1] \times [n_2]$ is a product of two chains.  Antichains of $P$ are in correspondence with other combinatorial objects such as alignments of strings, words in a three-letter alphabet, and endpoints of subsequences in walks \cite{bouyssou2024maximal}.  Thus $\Phi(P ; s, t, x, y, z)$ encodes enumerative information of these other objects.
\end{examp}


\section{Series-Parallel Posets}

In this section we focus on generalized interval polynomials of the well known Series-Parallel posets.  These posets are built with the $+$ and $\oplus$ operations, for which we know how to compute the generalized interval polynomial.  A generating series for the number of these posets was found in \cite{stanley1974enumeration}, and the asymptotics were computed in \cite{bayoumi1989asymptotic}.

\begin{df}
    A \emph{Series-Parallel (SP) poset} is a poset that is constructed inductively by the operations of $+$ and $\oplus$, starting with single elements.  Equivalently, a poset is SP if and only if it is $N$-free, that is it does not contain a subposet isomorphic to the $N$ poset which is illustrated on the right in Figure~\ref{fig:combined_examples}.
\end{df}


SP posets are uniquely determined by their \emph{composition tree} (see, for example, \cite{schrader1997setup}), which is a rooted tree that describes the tree structure of $+$ and $\oplus$ operations used to construct the poset starting from single elements.  Example \ref{ex:comp_tree} illustrates how such a tree determines an SP poset.

\begin{examp}
    Consider the SP poset $P$ shown in the left of Figure \ref{fig:ex_of_sp_tree}.  Since $P$ is a disjoint union of two posets that are themselves not a disjoint union, the root $r$ of the composition tree $T$ in the right of Figure \ref{fig:ex_of_sp_tree} is a vertex labelled $+$ with two neighbours.  The two-element component of $P$ is the ordinal sum of the single-element posets with vertices $a$ and $b$, so the neighbour of $r$ corresponding to this component is a vertex labelled $\oplus$ which has two leaves, labelled $a$ and $b$.  Similarly, the other component of $P$ is the ordinal sum of the antichain $\{c, d\}$ with $\{e\}$.  Note that the plane structure of the tree at each vertex corresponding to a $\oplus$ operation matters, since $\{c, d\} \oplus \{e\} \neq \{e\} \oplus \{c, d\}$.

    \begin{figure}[h]
        \centering
        \begin{tikzpicture}[scale=0.8]
            \node (a) at (0, 0) {};
            \node [circle, fill, inner sep=3pt, label={[xshift=-0.3cm, yshift=-0.35cm]$a$}] at (0, 0) {};
            \node (b) at (0, 2) {};
            \node [circle, fill, inner sep=3pt, label={[xshift=-0.3cm, yshift=-0.35cm]$b$}] at (0, 2) {};
            \node (c) at (1, 0) {};
            \node [circle, fill, inner sep=3pt, label={[xshift=-0.3cm, yshift=-0.35cm]$c$}] at (1, 0) {};
            \node (d) at (3, 0) {};
            \node [circle, fill, inner sep=3pt, label={[xshift=-0.3cm, yshift=-0.35cm]$d$}] at (3, 0) {};
            \node (e) at (2, 2) {};
            \node [circle, fill, inner sep=3pt, label={[xshift=-0.3cm, yshift=-0.35cm]$e$}] at (2, 2){};
            
            \draw (a) -- (b);
            \draw (c) -- (e) -- (d);

            \node (1) at (7, 0) {};
            \node [circle, inner sep=3pt] at (7, 0) {$+$};
            \node (2) at (6, 0.666666) {};
            \node [circle, inner sep=3pt] at (6, 0.666666) {$\oplus$};
            \node (3) at (8, 0.666666) {};
            \node [circle, inner sep=3pt] at (8, 0.666666) {$\oplus$};
            \node (4) at (5.333333, 1.333333) {};
            \node [circle, inner sep=3pt] at (5.333333, 1.333333) {$a$};
            \node (5) at (6.666666, 1.333333) {};
            \node [circle, inner sep=3pt] at (6.666666, 1.333333) {$b$};
            \node (6) at (7.333333, 1.333333) {};
            \node [circle, inner sep=3pt] at (7.333333, 1.333333) {$+$};
            \node (7) at (8.666666, 1.333333) {};
            \node [circle, inner sep=3pt] at (8.666666, 1.333333) {$e$};
            \node (8) at (6.666666, 2) {};
            \node [circle, inner sep=3pt] at (6.666666, 2) {$c$};
            \node (9) at (8, 2) {};
            \node [circle, inner sep=3pt] at (8, 2) {$d$};

            \draw (2) -- (1) -- (3);
            \draw (4) -- (2) -- (5);
            \draw (6) -- (3) -- (7);
            \draw (8) -- (6) -- (9);

        \end{tikzpicture}
        \caption{An SP poset and its composition tree.}
        \label{fig:ex_of_sp_tree}
    \end{figure}
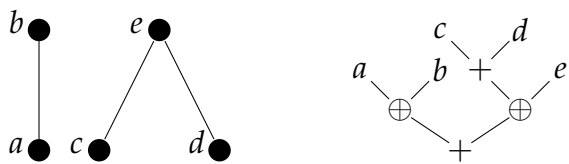
    \label{ex:comp_tree}
\end{examp}

While the generalized interval polynomial does not distinguish all posets, 
in this section we prove that it does distinguish the class of SP posets. Distinguishing SP posets has been investigated previously in the literature: in \cite{gordon1996series} it was shown that a strict subclass of SP posets are distinguishable via their \emph{Tutte polynomial}, while it was left undetermined whether the Tutte polynomial distinguishes all SP posets.  Simple computation reveals that the ordinary interval polynomial $\phi$ fails to distinguish SP posets, see Figure~\ref{fig:sp_pos_ord_poly_nondis}.  

\begin{figure}[h]
    \centering
    \begin{tikzpicture}[scale=0.8]
        \node (a) at (0, 0) {};
        \node [circle, fill, inner sep=3pt] at (0, 0) {};
        \node (b) at (-1, 1) {};
        \node [circle, fill, inner sep=3pt] at (-1, 1) {};
        \node (c) at (-0.333333, 1) {};
        \node [circle, fill, inner sep=3pt] at (-0.333333, 1) {};
        \node (d) at (0.333333, 1) {};
        \node [circle, fill, inner sep=3pt] at (0.333333, 1) {};
        \node (e) at (1, 1) {};
        \node [circle, fill, inner sep=3pt] at (1, 1) {};

        \node (f) at (2, 0) {};
        \node [circle, fill, inner sep=3pt] at (2, 0) {};
        \node (g) at (1.666666, 1) {};
        \node [circle, fill, inner sep=3pt] at (1.666666, 1) {};
        \node (h) at (2.333333, 1) {};
        \node [circle, fill, inner sep=3pt] at (2.333333, 1) {};
        \node (i) at (1.666666, 2) {};
        \node [circle, fill, inner sep=3pt] at (1.666666, 2) {};
        \node (j) at (2.333333, 2) {};
        \node [circle, fill, inner sep=3pt] at (2.333333, 2) {};

        \draw (b) -- (a);
        \draw (c) -- (a);
        \draw (d) -- (a);
        \draw (e) -- (a);
        \draw (i) -- (g) -- (f) -- (h) -- (j);

        \node (1a) at (4.666666, 0) {};
        \node [circle, fill, inner sep=3pt] at (4.666666, 0) {};
        \node (1b) at (4, 1) {};
        \node [circle, fill, inner sep=3pt] at (4, 1) {};
        \node (1c) at (4.666666, 1) {};
        \node [circle, fill, inner sep=3pt] at (4.666666, 1) {};
        \node (1d) at (5.333333, 1) {};
        \node [circle, fill, inner sep=3pt] at (5.333333, 1) {};
        \node (1e) at (6, 1) {};
        \node [circle, fill, inner sep=3pt] at (6, 1) {};

        \node (1f) at (6.666666, 0) {};
        \node [circle, fill, inner sep=3pt] at (6.666666, 0) {};
        \node (1g) at (6.666666, 1) {};
        \node [circle, fill, inner sep=3pt] at (6.666666, 1) {};
        \node (1h) at (7.333333, 1) {};
        \node [circle, fill, inner sep=3pt] at (7.333333, 1) {};
        \node (1i) at (4.666666, 2) {};
        \node [circle, fill, inner sep=3pt] at (4.666666, 2) {};
        \node (1j) at (6.666666, 2) {};
        \node [circle, fill, inner sep=3pt] at (6.666666, 2) {};

        \draw (1b) -- (1a);
        \draw (1i) -- (1c) -- (1a) -- (1d);
        \draw (1e) -- (1f);
        \draw (1j) -- (1g) -- (1f) -- (1h);
    \end{tikzpicture}
    \caption{Two SP posets that are not distinguished by the ordinary interval polynomial.}
    \label{fig:sp_pos_ord_poly_nondis}
\end{figure}
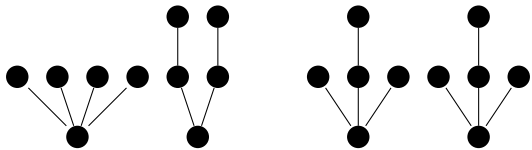

Our proof that the generalized interval polynomial distinguishes SP posets will go as follows: first, we show that we can recover the sizes of each summand in an ordinal sum of posets from the generalized interval polynomial.  Using this, we recover the generalized interval polynomial of each summand.  We then prove that the generalized interval polynomial of an ordinal sum does not non-trivially factor.  So, given the generalized interval polynomial we can determine if an SP poset is a disjoint union or an ordinal sum and the polynomials of each factor of these operations.  Inductively, in this way we construct the composition tree of the poset, hence uniquely determining the poset.

\begin{lm}
    Let $P = P_1 \oplus \cdots \oplus P_r$ be an ordinal sum of posets where each $P_i$ cannot be written as an ordinal sum of strictly smaller posets.  Then we can recover $|P_1|, \ldots, |P_r|$ from $\Phi(P ; s, t, x, y, z)$.
    \label{lm:ord_sum_sizes}
\end{lm}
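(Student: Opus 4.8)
The plan is to extract from $\Phi(P;s,t,x,y,z)$ enough numerical invariants to pin down the multiset $\{|P_1|,\dots,|P_r|\}$ together with its order, then argue that the ordinal-sum decomposition into ordinally-indecomposable pieces is unique so that these sizes are well defined. First I would recall that $\Phi$ records $|P|$ (the largest power of $z$ appearing, by Table~\ref{tab:info_from_poly}), so we know $n := |P| = |P_1| + \cdots + |P_r|$. The key observation is that in $P = P_1 \oplus \cdots \oplus P_r$, the principal downsets are exactly the sets of the form $P_1 \oplus \cdots \oplus P_{i-1} \oplus \Lambda_{P_i}(v)$ for $v \in P_i$, so their sizes are $|P_1| + \cdots + |P_{i-1}| + |\Lambda_{P_i}(v)|$. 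By Table~\ref{tab:other_gens_from_poly}, the generating polynomial for principal downset sizes is obtained as $[t]\Phi(P;1,t,1,yw,1/w)|_{w=0}$ (up to the roles of variables; more precisely the principal upset/downset data sits in $[z]\Phi$ and in the $[s]$, $[t]$ specializations). So the multiset of principal downset sizes is recoverable.

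The heart of the argument is then a combinatorial reconstruction: from the multiset $M$ of principal downset sizes of $P$, recover $|P_1|, |P_1|+|P_2|, \dots, |P_1|+\cdots+|P_r| = n$. Write $c_i = |P_1| + \cdots + |P_i|$ for the partial sums, with $c_0 = 0$. The principal downsets contributed by block $P_i$ all have sizes in the half-open interval $(c_{i-1}, c_i]$, and in fact there is at least one of size exactly $c_i$ (take $v$ maximal in $P_i$, so $\Lambda_{P_i}(v) = P_i$ — this uses that $P_i$ has a single "top level"? no — rather, every maximal element $v$ of $P_i$ has $\Lambda_{P_i}(v)$ possibly smaller than $P_i$). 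I would instead argue: the value $c_i$ appears in $M$ because the principal downset generated by any maximal element of $P_i$ in $P$ that is *above all of $P_i$*... this needs care. The clean route: $c_i \in M$ for each $i$ because $P_i$ itself, being ordinally indecomposable with more than... hmm. Actually $P_i$ equals a principal downset of $P$ iff $P_i$ has a unique maximal element. To avoid this gap, I would instead use the generalized intervals or the full antichain-refined data: the downsets of $P$ (not just principal ones, via $\Phi(P;1,1,xw,1,1/w)|_{w=0}$) are exactly $P_1 \oplus \cdots \oplus P_{i-1} \oplus D$ for $D$ a downset of $P_i$, together with $\emptyset$; among these, the sizes $c_0 < c_1 < \cdots < c_r$ all occur (taking $D = \emptyset$ or $D = P_i$), and these are precisely the sizes $k$ such that $k$ is the size of a downset of $P$ and $n-k$ is the size of an upset of $P$ — equivalently both $k$ and its complement size arise, which forces the cut to respect the ordinal decomposition. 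This last equivalence is where the ordinal (as opposed to arbitrary) structure is used and will be the main obstacle.

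So the steps, in order: (1) recover $n=|P|$ and the downset-size generating polynomial and upset-size generating polynomial from $\Phi$ via the specializations in Table~\ref{tab:other_gens_from_poly}; (2) show that $k$ is a partial sum $c_i$ if and only if $P$ has a downset of size $k$ whose complement is an upset of size $n-k$ — for the forward direction take the "prefix" $P_1 \oplus \cdots \oplus P_i$; for the reverse, show any downset $D$ with $P\setminus D$ an upset must be a union of a prefix of the blocks, because if $D$ meets $P_i$ but does not contain it then some element of $P_i\setminus D$ is below some element of $P_{i+1}$... no, rather above some element of $D\cap P_i$, contradicting $D$ downward closed — wait, that is automatic; the real point is that $P\setminus D$ being *upward* closed forces $D$ to be a down-set that is also order-convex from above, and in an ordinal sum the only such sets split between blocks; (3) having identified $\{c_1 < \cdots < c_r\}$, set $|P_i| = c_i - c_{i-1}$; (4) finally note the decomposition $P = P_1 \oplus \cdots \oplus P_r$ into ordinally indecomposable pieces is unique (a standard fact: the "cut points" of $P$ are intrinsic), so the recovered sizes do not depend on any choice. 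The main obstacle is step (2): proving that a subset $D \subseteq P$ with $D$ downward closed and $P \setminus D$ upward closed must be one of the prefixes $P_1\oplus\cdots\oplus P_i$. This is true — such a $D$ is a "down-set whose complement is an up-set", i.e. $D$ and its complement are both order ideals/filters, which happens exactly at the vertical cuts of the Hasse diagram, and in an ordinal sum these cuts occur only between consecutive indecomposable blocks, using the indecomposability of each $P_i$ to rule out cuts interior to a block — but writing this cleanly, including the reduction that an interior cut of $P_i$ would exhibit $P_i$ as an ordinal sum, is the crux and should be stated as the key claim of the proof.
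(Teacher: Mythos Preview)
Your step~(2) contains a fatal error: for \emph{any} downset $D$ of a poset $P$, the complement $P\setminus D$ is automatically an upset. (If $u\notin D$ and $u\le v$, then $v\in D$ would force $u\in D$ by downward closure.) So the condition ``$D$ is a downset and $P\setminus D$ is an upset'' is satisfied by every downset, not just the prefixes $P_1\oplus\cdots\oplus P_i$. Your ``main obstacle'' claim --- that such a $D$ must be a prefix --- is simply false: take any proper nonempty downset $D'\subsetneq P_1$; it is a downset of $P$ with upset complement, but $|D'|$ is not a cut point. Consequently the downset-size polynomial (equivalently the upset-size polynomial) recovers only the multiset of downset sizes, which in general contains many values that are not among the $c_i$, and you have given no way to single the $c_i$ out.

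What you presumably intended is the stronger condition that every element of $D$ lies below every element of $P\setminus D$; those $D$ are exactly the prefixes, by ordinal indecomposability of the blocks. But that relational condition is not visible in the bare downset-size data, and you have not explained how to read it off from $\Phi$. The paper extracts it by a more delicate argument: it looks at the terms $c\,stx^{b}y^{a}z^{2}$ with $a+b=|P|+2$, which record cover relations $u\lessdot v$ with $\Lambda(v)\cup V(u)=P$, and then distinguishes the genuine block-boundary covers (those with $u$ maximal in $P_\ell$ and $v$ minimal in $P_{\ell+1}$) from spurious ones by a numerical test comparing the coefficient $c_k$ against a product $i\cdot j$ read from higher-antichain terms $s^{j}t^{i}x^{b_k+i}y^{|P|-b_k+j}z^{i+j}$. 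You would need either to repair your approach by finding some specialization of $\Phi$ that detects the ``every element below every element'' condition, or to adopt an argument closer to the paper's.
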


\begin{proof}
    We know from Table \ref{tab:info_from_poly} that $|P| = [s t z]\Phi(P ; s, t, 1, 1, z)$, and that the cover relations of $P$ exactly correspond to the coefficient $[s t z^2] \Phi(P ; s, t, x, y, z)$.  Consider the terms of $\Phi(P ; s, t, x, y, z)$ of the form $c s t x^b y^a z^2$ where $c > 0$, $a+b = |P|+2$.  These terms correspond to cover relations $u < v$ such that for every $w \in P$, either $u \leq w$ or $w \leq v$.  A cover relation such that $u$ is a maximal element of $P_\ell$ and $v$ a minimal element of $P_{\ell+1}$ for some $\ell < r$ corresponds to a term of this form, but not all terms of this form correspond to these cover relations.  The procedure below will identify those terms that do.
    
    Define $b = (b_1, \ldots, b_m)$ to be the sequence of integers such that for each $k$, the term $stx^{b_k+1}y^{|P|-b_k+1}z^2$ has non-zero coefficient $c_k$, where $b_k < b_{k+1}$.  The rest of the proof will proceed as an algorithm where we will determine the size of each ordinal summand.
    
    \noindent \textbf{Step 0.} Set $k, \ell=1$.
    
    \noindent \textbf{Step 1.} Set $b_k$ as our candidate for $|P_\ell|$.
    
    \noindent \textbf{Step 2.} Let $i$ and $j$ be maximum such that the coefficient of $s^j t^i x^{b_k+i} y^{|P|-b_k+j} z^{i+j}$ has non-zero coefficient.  In particular, this coefficient will be $1$.  
    
    If $c_k = i \cdot j$, then the term $c_k stx^{b_k+1}y^{|P|-b_k+1}z^2$ corresponds to all the cover relations between maximal elements of $P_\ell$ and minimal elements of $P_{\ell+1}$, and the term $s^j t^i x^{b_k+i} y^{|P|-b_k+j} z^{i+j}$ corresponds to the upset $U$ generated by the maximal elements of $P_\ell$ and the downset $D$ generated by the minimal elements of $P_{\ell+1}$.  This is because for each element $w$ of $P$, either $u \leq w$ for some element $u$ of the $j$-antichain or $w \leq v$ for some element $v$ of the $i$-antichain, and the intersection $U \cap D$ consists only of the elements in these antichains.  Hence setting $Q$ to be the poset that is the downset of the $j$-antichain and $Q'$ to be the poset that is the upset of the $i$-antichain, we have $P = Q \oplus Q'$.  Thus $i$ is the number of minimal elements of $P_{\ell+1}$, $j$ the number of maximal elements of $P_\ell$, and $|P_\ell| = b_k - |P_1| - \cdots - |P_{\ell-1}|$, with $|P_0| = 0$.  Repeat \emph{Step 1} with the updated values $k = k+1$ and $\ell = \ell + 1$.  
    
    If $c_k \neq i \cdot j$ (in which case $ c_k < i \cdot j$), then we have not found the boundary between $P_\ell$ and $P_{\ell+1}$ at this $b_k$ so repeat \emph{Step 1} with the updated value $k = k+1$.
\end{proof}


\begin{lm}
    Let $P_1, \ldots, P_r$ and $P$ be as in Lemma \ref{lm:ord_sum_sizes}.  Then for each $1 \leq \ell \leq r$, we can recover $\Phi(P_\ell ; s, t, x, y, z)$ from $\Phi(P ; s, t, x, y, z)$.
    \label{lm:ord_sum_recover}
\end{lm}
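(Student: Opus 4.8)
The plan is to recover $\Phi(P_\ell; s, t, x, y, z)$ by isolating, inside $\Phi(P; s, t, x, y, z)$, the sub-polynomial contributed by those pairs $U \preceq D$ that ``live in the $\ell$-th summand''. Write $n_0 = 0$ and $n_\ell = |P_1| + \cdots + |P_\ell|$ for $1 \le \ell \le r$, so that $0 = n_0 < n_1 < \cdots < n_r = |P|$; these numbers are available from $\Phi(P; s, t, x, y, z)$ by Lemma \ref{lm:ord_sum_sizes}.

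The first step is a structural observation about ordinal sums. An antichain of $P_1 \oplus \cdots \oplus P_r$ meets at most one summand, so for any pair $U \preceq D$ with $U, D \ne \emptyset$ there are unique indices $i, j$ with $A(U) \subseteq P_i$ and $A(D) \subseteq P_j$. Since every element of $P_m$ lies above every element of $P_{m'}$ whenever $m' < m$, one checks from the definition of $\preceq$ that $i \le j$, that $U = (U \cap P_i) \cup P_{i+1} \cup \cdots \cup P_r$ with $U \cap P_i$ an upset of $P_i$ satisfying $A(U \cap P_i) = A(U)$, and dually that $D = P_1 \cup \cdots \cup P_{j-1} \cup (D \cap P_j)$ with $D \cap P_j$ a downset of $P_j$. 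Consequently $|U| = |U \cap P_i| + (|P| - n_i)$ and $|D| = |D \cap P_j| + n_{j-1}$, with $1 \le |U \cap P_i| \le |P_i|$ and $1 \le |D \cap P_j| \le |P_j|$, so the exponent of $y$ in the monomial associated to $(U, D)$ lies in the integer interval $(|P| - n_i, \, |P| - n_{i-1}]$ and the exponent of $x$ lies in $(n_{j-1}, \, n_j]$.

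The key point is that the intervals $(|P| - n_i, \, |P| - n_{i-1}]$, taken over $i = 1, \ldots, r$, partition $\{1, \ldots, |P|\}$, as do the intervals $(n_{j-1}, n_j]$ taken over $j = 1, \ldots, r$; hence the indices $i$ and $j$ are determined by the monomial, and every non-constant monomial of $\Phi(P; s, t, x, y, z)$ receives contributions only from pairs with one fixed value of $(i, j)$. Now restrict to the diagonal case $i = j = \ell$. A short computation gives $U \cap D = (U \cap P_\ell) \cap (D \cap P_\ell)$, and the assignment $(U, D) \mapsto (U \cap P_\ell, D \cap P_\ell)$ is a bijection from the pairs of $P$ with $A(U), A(D) \subseteq P_\ell$ and $U, D \ne \emptyset$ onto the non-empty pairs $U' \preceq D'$ of $P_\ell$, carrying the monomial contributed by $(U', D')$ to $\Phi(P_\ell; s, t, x, y, z)$ to $x^{n_{\ell-1}} y^{|P| - n_\ell}$ times that monomial. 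Summing over this block, the sum of all monomials of $\Phi(P; s, t, x, y, z)$ whose $x$-exponent lies in $(n_{\ell-1}, n_\ell]$ and whose $y$-exponent lies in $(|P| - n_\ell, \, |P| - n_{\ell-1}]$ equals $x^{n_{\ell-1}} y^{|P| - n_\ell}\bigl(\Phi(P_\ell; s, t, x, y, z) - 1\bigr)$. Dividing by $x^{n_{\ell-1}} y^{|P| - n_\ell}$ and adding $1$ then recovers $\Phi(P_\ell; s, t, x, y, z)$.

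I expect the main obstacle to be the bookkeeping in the structural step: verifying $i \le j$ and pinning down the exact descriptions of $U$, $D$, and $U \cap D$ in the ordinal sum (these are refinements of the case analysis in the proof of the ordinal-sum formula). Once that is in hand, the partition-of-intervals remark that lets one read $(i, j)$ off a monomial, and the diagonal bijection, are both routine. A small point to keep in mind is that the only degenerate pair, $U = D = \emptyset$, contributes solely to the constant term $1$ and so is automatically excluded from every block; this is why each block equals $x^{n_{\ell-1}} y^{|P| - n_\ell}$ times $\Phi(P_\ell; s, t, x, y, z) - 1$ rather than $\Phi(P_\ell; s, t, x, y, z)$ itself.
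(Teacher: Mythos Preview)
Your proof is correct and follows essentially the same approach as the paper: use the sizes $|P_1|,\ldots,|P_r|$ from Lemma~\ref{lm:ord_sum_sizes} to read off, from the $x$- and $y$-exponents of a non-constant monomial, which summand contains $A(U)$ and which contains $A(D)$, then for the diagonal block $i=j=\ell$ shift by $x^{-n_{\ell-1}}y^{-(|P|-n_\ell)}$ to recover $\Phi(P_\ell)$. Your version is in fact more careful than the paper's, since you impose both the lower and upper bounds on the exponents (the paper states only $|D|>h_\ell$ and $|U|>h_\ell'$, which by itself identifies only $i\le\ell\le j$ rather than $i=j=\ell$).
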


\begin{proof}
    From Lemma \ref{lm:ord_sum_sizes}, we know the sequence of cardinalities $|P_1|, \ldots, |P_r|$.  A (non-constant) term $c_{U, D} s^{|A(U)|} t^{|A(D)|} x^{|D|} y^{|U|} z^{|U \cap D|}$ of $\Phi(P ; s, t, x, y, z)$ corresponds to a non-empty upset/downset pair $U \preceq D$ such that $A(U), A(D) \in P_\ell$ if and only if $|D| > |P_1| + \cdots + |P_{\ell-1}|$ and $|U| > |P_{\ell+1}| + \cdots + |P_r|$.  Thus each (non-constant) term of $\Phi(P_\ell ; s, t, x, y, z)$ is exactly of the form $c_{U, D} s^{|A(U)|} t^{|A(D)|} x^{|D|-h_\ell} y^{|U|-h_\ell'} z^{U \cap D}$ where $h_\ell = |P_1| + \cdots + |P_{\ell-1}|$ and $h_\ell' = |P_{\ell+1}| + \cdots + |P_r|$.
\end{proof}

\noindent The next lemma establishes that a generalized interval polynomial of an SP poset factors non-trivially if and only if the poset is a disjoint union.

\begin{lm}
    Let $P = P_1 \oplus P_2$ for any non-empty posets $P_1, P_2$.  Then $\Phi(P ; s, t, x, y, z)$ does not factor into two non-constant polynomials each with constant coefficient 1.
    \label{lm:ord_sum_factor}
\end{lm}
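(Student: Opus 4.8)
The plan is to argue by contradiction: suppose $\Phi(P; s, t, x, y, z) = F \cdot G$ where $F, G$ are non-constant polynomials each with constant term $1$. The key structural fact to exploit is that every non-constant term of a generalized interval polynomial is divisible by $stxyz$ (noted after the definition), so writing $F = 1 + stxyz\,\tilde F$ and $G = 1 + stxyz\,\tilde G$, the product is $1 + stxyz(\tilde F + \tilde G) + (stxyz)^2 \tilde F \tilde G$. In particular, reading off the lowest-degree behaviour: the ``linear part'' (terms divisible by $stxyz$ but not by $(stxyz)^2$) of $\Phi(P)$ must equal $stxyz(\tilde F + \tilde G)$ modulo the quadratic correction. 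The idea is to pin down $F$ and $G$ by looking at the terms of $\Phi(P)$ in which $s$ and $t$ appear to the first power only, i.e. the specialization tracking principal upsets, downsets, and intervals, namely $[st]\Phi(P; s, t, x, y, z)$, together with the single-variable bookkeeping of Table~\ref{tab:info_from_poly}.

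First I would look at where the element count $|P|$ is recorded: $|P| = [stz]\Phi(P; s, t, 1, 1, z)$, and more usefully the terms $st z \, x^b y^{|P|+1-b}$ coming from singleton upset/downset pairs $U = D = \{w\}$ — there are exactly $|P|$ such terms, one for each element, with $x^{|\Lambda(w)|} y^{|V(w)|}$. Since $F, G$ have constant term $1$, each of $F$ and $G$ contributes its own collection of such $stz$-terms, and the multiset of pairs $(|\Lambda(w)|, |V(w)|)$ over $w \in P$ is partitioned into the contribution from $F$ and from $G$; say $F$ ``owns'' a subset $S \subseteq P$ and $G$ owns $P \setminus S$, with $|S|, |P\setminus S| \geq 1$ since both are non-constant. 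Now I would use the ordinal-sum structure: in $P = P_1 \oplus P_2$, every element $w \in P_1$ has $V(w) \supseteq P_2$ so $|V(w)| \geq |P_2| + 1$, and every $w \in P_2$ has $|\Lambda(w)| \geq |P_1| + 1$; moreover any element $w$ which is simultaneously minimal in $P_2$ and above a maximal element of $P_1$ gives a cover relation term, and the total size $|U \cap D|$ for a full upset/downset pair can reach $|P|$. The crucial point is that $\Phi(P)$ contains a term with $z$-exponent equal to $|P|$ (take $U = D = P$, which is a valid pair since $A(P) = $ the minimal elements $\subseteq P = D$ and symmetrically), and in a product $F\cdot G$ the maximum $z$-exponent is the sum of the maximum $z$-exponents of $F$ and $G$; tracking which $x$- and $y$-exponents accompany these top $z$-terms, together with the fact that the top term of $\Phi(P)$ is $s^{m} t^{m'} x^{|P|} y^{|P|} z^{|P|}$ (with $m, m'$ the number of minimal, maximal elements), forces $F$ and $G$ to each have a unique top term that multiplies correctly — and here the ordinal sum bites, because the minimal elements of $P$ are exactly those of $P_1$ and the maximal elements are exactly those of $P_2$, so the top term does not factor compatibly with any splitting of $P$ into two non-empty pieces unless one piece is empty.

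The main obstacle I anticipate is making the last step fully rigorous: ruling out every conceivable factorization rather than just the ``obvious'' ones. The clean way is probably to combine two observations. (1) The monomial $x^{|P|} y^{|P|} z^{|P|} s^m t^{m'}$ (from $U = D = P$) is the unique term of $\Phi(P)$ of maximal total degree in $x, y, z$; in any factorization $F \cdot G$ this monomial must be the product of the unique maximal-$(x,y,z)$-degree monomials of $F$ and $G$, say $x^{a}y^{b}z^{c}s^{?}t^{?}$ and $x^{|P|-a}y^{|P|-b}z^{|P|-c}s^{?}t^{?}$. (2) But $F$, being (if nontrivial) itself forced to look like the generalized interval polynomial of some ``sub-structure'', would need a term $x^a y^b z^c$ with $c = a = b$ only if... — actually the sharper route is: consider the coefficient of $z^{|P|}$ in $\Phi(P)$. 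I claim it is exactly $s^m t^{m'} x^{|P|} y^{|P|}$, i.e. a single monomial, because $|U \cap D| = |P|$ forces $U = D = P$. In a factorization, $[z^{|P|}](FG) = \sum_{c} [z^c]F \cdot [z^{|P|-c}]G$, and since $\deg_z F = c_0 < |P|$ and $\deg_z G = |P| - c_0 < |P|$ are both strictly positive, we get $[z^{|P|}](FG) = [z^{c_0}]F \cdot [z^{|P|-c_0}]G$, a product of two non-monomial polynomials in $x, y, s, t$ in general — and the key lemma to prove is that $[z^{c_0}]F$ cannot be a monomial, which follows because $F$ restricted to its top-$z$-degree part must, by the divisibility-by-$stxyz$ constraint applied to $F$ itself, carry at least two distinct $(x,y)$-monomials (one from a ``full upset = full downset'' configuration and one from a ``chain'' configuration inside whatever $F$ represents). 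I would fill this in by showing directly that any polynomial $F$ with constant term $1$ dividing $\Phi(P)$ has $[z^{\deg_z F}]F$ non-monomial whenever $\deg_z F \geq 1$, using that $\Phi(P)$'s coefficient of $z^{|P|-1}$ has at least two terms (the cover-relation terms), which would be inconsistent with both $[z^{\deg_z F}]F$ and $[z^{\deg_z G}]G$ being monomials. This contradiction-hunting at the top $z$-degree is where the real work lies; everything else is bookkeeping with the specializations already tabulated.
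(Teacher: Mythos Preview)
Your approach has a genuine gap at exactly the point you flag as ``where the real work lies.'' The observation that $[z^{|P|}]\Phi(P)$ is the single monomial $s^{m}t^{m'}x^{|P|}y^{|P|}$, and hence in any factorization $\Phi = FG$ the top $z$-coefficients $[z^{c_0}]F$ and $[z^{|P|-c_0}]G$ must each be monomials, is correct and clean (and the divisibility of every non-constant term of $F,G$ by $stxyz$ can indeed be justified, e.g.\ by setting each variable to $0$ in turn). But your proposed next step --- deriving a contradiction from $[z^{|P|-1}]\Phi(P)$ having several terms --- cannot work as written, because that step never invokes the ordinal-sum hypothesis. Since $\Phi(P_1+P_2)=\Phi(P_1)\Phi(P_2)$ \emph{does} factor, any argument that applies uniformly to all posets must be wrong. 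Concretely, for the two-element antichain $\Phi=(1+stxyz)^2$ one has $F=G=1+stxyz$, $[z^{1}]F=stxy$ is a monomial, and $[z^{|P|-1}]\Phi=2stxy$ produces no contradiction. Your monomial-top-term idea does yield an immediate contradiction when $m=1$ or $m'=1$ (since then $a_1+b_1=1$ with $a_1,b_1\geq 1$), but once $P_1$ has at least two minimal elements and $P_2$ at least two maximal elements, nothing you have written rules out the factorization.

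The paper's proof brings in the ordinal-sum structure in a quite different way. Assuming $|P_1|\geq |P_2|$, it isolates a particular term $T=c\,s^{w}t^{w}x^{b}y^{a}z^{w}$ of $\Phi(P)$ coming from $U=D$ generated by a maximum antichain of $P_2$ (so $w=\mathrm{width}(P_2)$, $b>|P_1|$, $a\leq|P_2|$). It argues that $T$ cannot lie in $F$ or $G$ alone (else multiplying by any term of the other factor would exceed the allowed $t$-exponent given the $x$-exponent), so $T=T_F T_G$; an exponent-balancing argument then forces $T_F,T_G$ to each have equal $s$-, $t$-, and $z$-exponents, hence to correspond to antichains $A_F,A_G$; finally, whichever of $P_1,P_2$ each antichain lies in forces an inequality on $a$ or $b$ that contradicts $a\leq|P_2|$ and $b\leq|P|$. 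If you want to salvage the top-$z$-degree route, you would need to extract comparably sharp information tying the $x,y$-exponents of the leading monomials of $F,G$ to $|P_1|,|P_2|$; the bare monomiality of $[z^{\deg_z F}]F$ is not enough.
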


\begin{proof}
    Toward contradiction, suppose $\Phi(P ; s, t, x, y, z)$ does factor.  Then $\Phi(P ; s, t, x, y, z) = (1+F)(1+G)$ where $F, G$ are polynomials in $s, t, x, y, z$.  Since $\Phi(P ; s, t, x, y, z) = 1 + F + G + FG$, terms of $F$ and $G$ are also terms of $\Phi(P ; s, t, x, y, z)$.  This implies that each term of $F$ and $G$ has an exponent of at least one on each variable and has non-negative integer coefficient.  Again, we can determine $|P_1|$ and $|P_2|$ from $\Phi(P ; s, t, x, y, z)$.  We assume that $|P_1| \geq |P_2|$; The argument when $|P_1| \leq |P_2|$ is analogous.
    
    Observe that terms of $\Phi(P ; s, t, x, y, z)$ that have the form $c s^k t^k x^b y^a z^k$ come from an upset/downset pair $U \preceq D$ where $U$ and $D$ are generated by the same antichain of length $k$.  Let $w$ be the largest integer such that there is a term in $\Phi(P ; s, t, x, y, z)$ with non-zero coefficient such that the exponent on $t$ is $w$ and the exponent of $x$ is at least $|P_1|+w$.  Then $w = {\rm width}(P_2)$, since one of such terms corresponds to $U \preceq D$ where $|A(D)| = w$ and $|D| \geq |P_1|+w > |P_1|$, implying that $A(D)$ is an antichain in $P_2$.  Let $T$ be a term in $\Phi(P ; s, t, x, y, z)$ with non-zero coefficient corresponding to $U \preceq D$ where $U, D$ are generated by the same antichain of length $w$ in $P_2$.  So $T = c s^w t^w x^b y^a z^w$, where $c > 0, a \leq |P_2|, b \geq |P_1|+w$, and $a + b = |P|+w$.  Then $T$ is a term of $FG$, since if, without loss of generality, $T$ was a term of $F$, then the product of $T$ with any term of $G$ would have an exponent of $s$ greater than $w$, a contradiction to how we chose $w$.
    
    So, $T = T_F T_G$ where $T_F = c_F s^{w_{1, F}} t^{w_{2, F}} x^{b_F} y^{a_F} z^{w_{3, F}}$ and $T_G = c_G s^{w_{1, G}} t^{w_{2, G}} x^{b_G} y^{a_G} z^{w_{3, G}}$ are terms from $F$ and $G$ respectively.  Thus $b_F + b_G = b$, $a_F + a_G = a$, and $w = w_{1, F} + w_{1, G} = w_{2, F} + w_{2, G} = w_{3, F} + w_{3, G}$.  These imply $2w = w_{1, F} + w_{2, F} + w_{1, G} + w_{2, G}$, and also that $2w = 2w_{3, F} + 2w_{3, G}$.  Since $T_F, T_G$ are also terms of $\Phi(P ; s, t, x, y, z)$, we have $w_{3, F} \geq \max(w_{1, F}, w_{2, F})$ and $w_{3, G} \geq \max(w_{1, G}, w_{2, G})$.  This implies $2w_{3, F} \geq w_{1, F} + w_{2, F}$ and $2w_{3, G} \geq w_{1, G} + w_{2, G}$.  If either of these inequalities are strict, then $2w_{3, F} + 2w_{3, G} > w_{1, F} + w_{2, F} + w_{1, G} + w_{2, G}$, which is a contradiction since both sides of the inequality are $2w$ from above.  Thus $2w_{3, F} = w_{1, F} + w_{2, F}$ and $2w_{3, G} = w_{1, G} + w_{2, G}$, implying $\max(w_{1, F}, w_{2, F}) \leq (w_{1, F} + w_{2, F})/2$ and $\max(w_{1, G}, w_{2, G}) \leq (w_{1, G} + w_{2, G})/2$.  These inequalities hold if and only if $w_{1, F} = w_{2, F}$ and $w_{1, G} = w_{2, G}$.  Hence we conclude $w_{1, F} = w_{2, F} = w_{3, F}$ and $w_{1, G} = w_{2, G} = w_{3, G}$.
    
    So $T_F$ has the form $c_F s^{w_F} t^{w_F} x^{b_F} y^{a_F} z^{w_F}$ and $T_G$ has the form $c_G s^{w_G} t^{w_G} x^{b_G} y^{a_G} z^{w_G}$, implying that they correspond to upset/downset pairs generated by antichains $A_F, A_G$ of length $w_F, w_G$, respectively.  If either of $A_F$ or $A_G$ is in $P_1$, then $a_F > |P_2|$ or $a_G > |P_2|$ and hence $a > |P_2|$, a contradiction.  But if both $A_F$ and $A_G$ are in $P_2$, then $b_F > |P_1|$ and $b_G > |P_1|$ implying $b > 2|P_1| \geq |P|$, also a contradiction.  Therefore, $\Phi(P ; s, t, x, y, z)$ cannot factor.
\end{proof}

Now we have enough information to inductively construct the composition tree of an SP poset from its generalized interval polynomial.

\begin{tm}
    If $P$ is an SP poset, then $P$ can be uniquely reconstructed from $\Phi(P ; s, t, x, y, z)$.
\end{tm}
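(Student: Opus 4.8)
The plan is to turn the outline preceding Lemma~\ref{lm:ord_sum_sizes} into a recursive reconstruction of the composition tree of $P$, arguing by induction on $|P|$ and using Proposition~\ref{prop:union} together with Lemmas~\ref{lm:ord_sum_sizes}, \ref{lm:ord_sum_recover}, and~\ref{lm:ord_sum_factor}.

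The first step is to record the structural dichotomy for SP posets: by inspecting the root of the composition tree, every SP poset on at least two elements is either a disjoint union $P_1 + P_2$ of nonempty posets (equivalently, it is disconnected) or an ordinal sum $P_1 \oplus P_2$ of nonempty posets (equivalently, it is connected), and never both. I would then translate this into a statement about factorization. Working in the UFD $\mathbb{Q}[s,t,x,y,z]$, since $\Phi(P;s,t,x,y,z)$ has constant term $1$ none of its irreducible factors has vanishing constant term, so $\Phi(P;s,t,x,y,z)$ has a factorization into irreducibles, unique up to order, in which each factor is normalized to have constant term $1$; moreover a non-constant polynomial with constant term $1$ is irreducible precisely when it admits no factorization into two non-constant polynomials with constant term $1$. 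For a connected SP poset $C$ this makes $\Phi(C;s,t,x,y,z)$ irreducible: if $|C|=1$ then $\Phi(C;s,t,x,y,z) = 1 + stxyz$, which is irreducible, and if $|C| \ge 2$ then $C$ is an ordinal sum and Lemma~\ref{lm:ord_sum_factor} applies. Combining this with Proposition~\ref{prop:union}, the unique constant-term-$1$ irreducible factorization of $\Phi(P;s,t,x,y,z)$ is exactly $\prod_C \Phi(C;s,t,x,y,z)$, the product over the connected components $C$ of $P$; in particular $\Phi(P;s,t,x,y,z)$ is irreducible if and only if $P$ is connected, and from $\Phi(P;s,t,x,y,z)$ we can read off the multiset of component polynomials.

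The reconstruction then runs as follows. If $\Phi(P;s,t,x,y,z) = 1$, then $P = \emptyset$; if it equals $1 + stxyz$, then $P$ is a single element. Otherwise, factor $\Phi(P;s,t,x,y,z)$ into constant-term-$1$ irreducibles. If there is more than one factor, $P$ is the disjoint union of its connected components, one per factor, and we recurse on each factor --- each corresponds to a strictly smaller poset since there are at least two nonempty components. If instead $\Phi(P;s,t,x,y,z)$ is irreducible and not equal to $1 + stxyz$, then $P$ is connected on at least two elements, hence an ordinal sum $P = P_1 \oplus \cdots \oplus P_r$ with $r \ge 2$ in which each $P_j$ is $\oplus$-irreducible; Lemma~\ref{lm:ord_sum_sizes} recovers $|P_1|, \ldots, |P_r|$, Lemma~\ref{lm:ord_sum_recover} recovers each $\Phi(P_j;s,t,x,y,z)$, and we recurse on each $P_j$, which is strictly smaller and is either a single element or a disjoint union, so that the next step of the recursion is one of the cases already described. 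Reassembling the pieces with $+$ and $\oplus$ in the order dictated by the recursion produces the composition tree of $P$, and since an SP poset is determined by its composition tree, $P$ is reconstructed. Induction on $|P|$ gives termination and well-definedness, because every recursive call is on a poset with strictly fewer elements.

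The only step carrying real content beyond bookkeeping is the passage from Lemma~\ref{lm:ord_sum_factor} and the SP dichotomy to the claim that the constant-term-$1$ irreducible factorization of $\Phi(P;s,t,x,y,z)$ must respect the connected components --- one must be sure that no ``spurious'' factorization exists and that single-element components are recognizable as the factor $1 + stxyz$. I expect the most delicate point to be verifying that every SP poset on at least two elements that is not an ordinal sum is genuinely a disjoint union, so that the recursion's cases are exhaustive and the base cases $1$ and $1 + stxyz$ are always reached; this is a routine consequence of the composition-tree description of SP posets but deserves an explicit sentence.
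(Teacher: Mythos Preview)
Your proposal is correct and follows essentially the same approach as the paper: induction on $|P|$, factoring $\Phi(P;s,t,x,y,z)$ (using Proposition~\ref{prop:union} and Lemma~\ref{lm:ord_sum_factor}) to recover the connected components, and then applying Lemmas~\ref{lm:ord_sum_sizes} and~\ref{lm:ord_sum_recover} to recover the $\oplus$-indecomposable summands of each connected piece. Your version is somewhat more explicit about working in a UFD, about the irreducibility of $1+stxyz$, and about the SP dichotomy guaranteeing exhaustive cases, but the underlying argument is the same as the paper's.
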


\begin{proof}
    We proceed by induction on $|P|$.  If $|P| = 1$, then there is only one choice of $P$ and so the result trivially holds.  Suppose the result holds for SP posets with fewer than $|P|$ elements.  Let $\Phi(P ; s, t, x, y, z) = F_1 F_2 \cdots F_r$ be the unique minimal factorization of $\Phi(P ; s, t, x, y, z)$ scaled so all factors have constant coefficient 1.  From Lemma \ref{lm:ord_sum_factor}, we know that the polynomials of ordinal sums do not factor; it follows that $P$ has exactly $r$ components $P_1, \ldots, P_r$, i.e.\ $P = P_1 + \cdots + P_r$. From Proposition \ref{prop:union}, we have $F_i = \Phi(P_i ; s, t, x, y, z)$ (after some labelling of the $P_1, \ldots, P_r$).  For each $i$, since $P_i$ is not a disjoint union of smaller, non-empty posets, it is an ordinal sum.  So $P_i = Q_{i, 1} \oplus \cdots \oplus Q_{i, \ell}$, where each $Q_{i, j}$, $1 \leq j \leq \ell$, is not an ordinal sum of smaller, non-empty posets.  By Lemma \ref{lm:ord_sum_recover}, we can recover $\Phi(Q_{i, j} ; s, t, x, y, z)$ for each $j$ from $\Phi(P ; s, t, x, y, z)$.  By induction, we can uniquely determine each $Q_{i, j}$, and hence $P$.
\end{proof}

This gives a positive answer to the question of distinguishing SP posets from their generalized interval polynomials.

\begin{cor}
    Given SP posets $P$ and $Q$, if $\Phi(P; s, t, x, y, z) = \Phi(Q; s, t, x, y, z)$, then $P \cong Q$.
\end{cor}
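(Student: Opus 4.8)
The plan is to deduce this immediately from the preceding Theorem. Both $P$ and $Q$ are SP posets, so the reconstruction procedure described in the proof of the Theorem applies to each of them. I would run that procedure on the common polynomial $\Phi(P; s, t, x, y, z) = \Phi(Q; s, t, x, y, z)$: it produces, from this polynomial alone, a well-defined composition tree and hence a single SP poset up to isomorphism. By the Theorem, applied to $P$, this reconstructed poset is isomorphic to $P$; applied to $Q$, it is isomorphic to $Q$. Transitivity of isomorphism then gives $P \cong Q$, which is exactly the claim.

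I do not expect any real obstacle here, since all the substantive work has already been carried out: Proposition \ref{prop:union} handles the disjoint-union factorization, Lemmas \ref{lm:ord_sum_sizes}--\ref{lm:ord_sum_factor} handle recovering the sizes of ordinal summands, recovering the generalized interval polynomial of each summand, and the non-factorization of the polynomial of an ordinal sum, and the Theorem assembles these into an inductive reconstruction of the composition tree. The only point that deserves a sentence of care is that ``uniquely reconstructed'' in the Theorem must be read as ``determined up to poset isomorphism'' — which is the only sensible reading, as relabelling the ground set of a poset leaves $\Phi$ unchanged — after which the Corollary is just a restatement of the Theorem in the language of distinguishing invariants. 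If one wanted a self-contained argument instead of citing the Theorem, one could rerun its induction on $|P|$ with $P$ and $Q$ in parallel, comparing at each stage their minimal factorizations into disjoint-union components (Proposition \ref{prop:union} and Lemma \ref{lm:ord_sum_factor}) and then the ordinal decompositions of those components (Lemmas \ref{lm:ord_sum_sizes} and \ref{lm:ord_sum_recover}); but this merely duplicates the Theorem's proof and adds nothing, so invoking the Theorem directly is the cleaner route.
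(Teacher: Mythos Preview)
Your proposal is correct and matches the paper's approach: the Corollary is stated without a separate proof, as an immediate consequence of the preceding Theorem. Your reading of ``uniquely reconstructed'' as ``determined up to isomorphism'' is the intended one, and the transitivity argument you give is exactly how the implication is meant to be cashed out.
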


\section{Distributions in Causal Sets}

\subsection{Causal Set Theory}

Causal Set Theory (CST) is a theory of quantum gravity in which spacetime is treated as a locally finite poset, or a \emph{causal set}.  Elements of the causal set are points in spacetime, and the relations in the causal set represent causal relations of the spacetime.  A thorough review of the development of CST can be found in \cite{surya2019causal}.  Given a causal set, one would like to extract physically relevant information directly from the partial order, such as Minkowski dimension, geodesics, or a d'Alembertian.

One way of constructing causal sets is to ``sprinkle'' points into a spacetime manifold according to some random Poisson process \cite{surya2019causal}.  This procedure produces manifold-like causal sets, but since it assumes a pre-existing spacetime it is not a viable process for how a causal set would actually be formed in nature.  However, sprinklings can be useful for verifying that discrete properties are physically meaningful in a continuum limit.

\subsection{Estimating Dimension}

In this section we focus on the problem of determining the Minkowski dimension of a causal set.  There are numerous methods for estimating this dimension (see, for example, \cite{surya2019causal}, \cite{aghili2019discrete}).  Here, we will focus on one method involving the distribution of interval sizes.

In \cite{glaser2013towards}, the authors determined the asymptotic behaviour of the distribution of interval sizes of a causal set sprinkled into a $d$-dimensional causal diamond.  Let $N_k^d$ be the number of intervals of cardinality $k+2$ in an $n$-element causal set sprinkled into a $d$-dimensional causal diamond.  The authors determined that for $k \geq 0$,

\[ s_k^d = \lim_{n \to \infty} \frac{N_k^d}{N_0^d} = \frac{\Gamma(\frac{2}{d}+k)}{\Gamma(\frac{2}{k}) \Gamma(k+1)} = \binom{\frac{2}{d}+k-1}{k}, \quad {\rm and \; so} \quad S_d(z) = \sum_{k \geq 0} s_k^d z^k = \frac{1}{(1-z)^{\frac{2}{d}}} \]

\noindent Consequently, the generating series of interval sizes contains information about the Minkowski dimension of a causal set.  Note that $[z]S_d(z) = s_1^d = \frac{2}{d}$, which is the limit of the proportion of $3$-element intervals divided by the number of $2$-element intervals.

One shortcoming of estimating dimension through the distribution of interval sizes is that the convergence of the values $s_k^d$ is slow \cite{aghili2019discrete}.  This led us to the following question: can the extra information from the distribution of \emph{generalized interval} sizes offer a better estimate for causal set dimension?

\acknowledgements{Thanks to Rafael Sorkin and Sumati Surya for useful conversations.}


\bibliographystyle{plain}
\bibliography{refs}

\end{document}